\theoremstyle{plain}
\newtheorem{theorem}{Theorem}[section]
\newtheorem{lemma}[theorem]{Lemma}
\theoremstyle{definition}
\theoremstyle{remark}
\def\ps@pprintTitle{%
 \let\@oddhead\@empty
 \let\@evenhead\@empty
 \def\@oddfoot{}%
 \def\@evenfoot{}}
\begin{document}

\begin{frontmatter}

\title{Analytic conjugation between planar reversible and Hamiltonian systems}

\author[1]{F.J. S. Nascimento\corref{cor1}}
\ead{francisco.jsn@univasf.edu.br}
\cortext[cor1]{Corresponding author}
\address[1]{Universidade Federal do Vale do S\~ao Francisco, Colegiado de Geologia, Campus Senhor do Bonfim, BA, Brazil}

\begin{abstract}
In this work we study the local structure of analytic planar vector fields
that are reversible with respect to the linear involution $R(u,v)=(u,-v)$.
We show that every analytic reversible vector field with a nondegenerate equilibrium
is locally analytically conjugate to a Hamiltonian system. More precisely, we prove that,
in a neighbourhood of the origin, the system is analytically equivalent to a Hamiltonian
vector field whose Hamiltonian assumes the classical normal form associated with the type
of the equilibrium: $H(x,y)=F(x^{2}+y^{2})$ in the elliptic case and
$H(x,y)=-F(x^{2}-y^{2})$ in the hyperbolic case, where $F$ is real-analytic and completely
determined by the dynamics. We also show that the conjugacy can be chosen equivariant,
that is, commuting with the reversing involution.

We further discuss the problem of \emph{global} equivalence, which in general remains
open, even in the planar case. In dimensions greater than $2$ the situation becomes
even more delicate: the equivalence between reversible and Hamiltonian systems is
known only at the \emph{formal} level, and the existence of an analytic conjugacy,
even locally, is still a widely open problem.
\end{abstract}

\begin{keyword}
Reversible systems\sep
Hamiltonian systems\sep
Analytic conjugacy\sep
Normal forms
\end{keyword}

\end{frontmatter}

\section{Introduction and main result}

Let $\Omega$ be an open subset of $\mathbb{R}^2$ and denote by
$C^\omega(\Omega,\mathbb{R}^d)$ the set of real-analytic functions defined on $\Omega$
with values in $\mathbb{R}^d$, for $d\in\{1,2\}$. Let
$P,Q\in C^\omega(\Omega,\mathbb{R})$ and consider the differential system
\begin{equation}\label{eq1}
\dot{x}=P(x,y),\qquad \dot{y}=Q(x,y), \qquad (x,y)\in \Omega.
\end{equation}
The system \eqref{eq1} determines on $\Omega$ the planar vector field
$X=(P,Q)\in C^\omega(\Omega,\mathbb{R}^2)$.

A point \(p\in\Omega\) such that \(X(p)=(0,0)\) is called a \emph{singular point} of \(X\).
We say that \(p\) is \emph{nondegenerate} if the determinant of the Jacobian matrix
\(DX(p)\) is nonzero. In this situation the linear classification yields four possible
types of equilibria:

\begin{itemize}
    \item a \textbf{sink} (or attracting focus), when the eigenvalues of \(DX(p)\)
    have negative real part; in this case there exists a neighbourhood \(V_p\) such that
    every solution starting in \(V_p\) converges to \(p\);

    \item a \textbf{source} (or repelling focus), when the eigenvalues of \(DX(p)\)
    have positive real part; then there exists a neighbourhood \(V_p\) such that every
    solution in \(V_p\setminus\{p\}\) moves away from \(p\);

    \item a \textbf{saddle}, when the eigenvalues of \(DX(p)\) are real and of opposite signs;

    \item a \textbf{center}, when the eigenvalues of \(DX(p)\) are purely imaginary and there
    exists a neighbourhood \(V_p\) such that every solution in \(V_p\setminus\{p\}\) is periodic
    and surrounds \(p\).
\end{itemize}

\medskip

An analytic planar system is called \emph{Hamiltonian} if there exists a function
\(H\in C^\omega(\Omega,\mathbb{R})\) such that
\[
\dot{x}=\partial_y H(x,y),\qquad
\dot{y}=-\partial_x H(x,y).
\]
The function \(H\) is the \emph{Hamiltonian}, and the vector field
\[
X_H(x,y)=\bigl(\partial_y H(x,y),\, -\partial_x H(x,y)\bigr)
\]
is the associated Hamiltonian vector field. The Hamiltonian is always a first integral,
since \(X_H\cdot\nabla H\equiv 0\). Moreover, in the plane the only possible nondegenerate
equilibria are centers (associated with local minima of \(H\)) and saddles
(associated with local maxima of \(H\)); see, for instance,
\cite{jarque1994structural, mello2004saddle}.

\medskip

A linear map $R:\mathbb{R}^2\to \mathbb{R}^2$ is called an \emph{involution} if
$R\neq \mathrm{id}$ and $R^2=\mathrm{id}$. A vector field
$X\in C^\omega(\Omega,\mathbb{R}^2)$ is said to be \emph{reversible with respect to $R$} if
\[
DR(q)\,X(q) = -X\bigl(R(q)\bigr), \qquad \text{for all } q\in\Omega.
\]

Reversibility with respect to a linear involution imposes strong restrictions on the linear
part of the system. In particular, it prevents the presence of eigenvalues with nonzero real
part; hence every nondegenerate equilibrium of a reversible system is necessarily either a
\emph{center} or a \emph{saddle}. For more details, we refer to
\cite{ devaney1976reversible,teixeira1997singularities,teixeira2011reversible}.

An analytic diffeomorphism $h:\Omega\to\Omega$ is said to \emph{commute with $R$} if
\[
h\circ R = R\circ h \quad \text{on } \Omega.
\]

\medskip

Let $\Omega_1,\Omega_2\subset\mathbb{R}^2$ be open sets. Two vector fields
$X_1\in C^\omega(\Omega_1,\mathbb{R}^2)$ and
$X_2\in C^\omega(\Omega_2,\mathbb{R}^2)$ are \emph{analytically conjugate}
if there exists an analytic diffeomorphism $h:\Omega_1\to\Omega_2$ such that
\begin{equation}\label{eq3}
D_q h \, X_1(q) = X_2\bigl(h(q)\bigr), \qquad q\in\Omega_1.
\end{equation}
In this case, $h$ sends singular points to singular points and carries periodic orbits
to periodic orbits, preserving their periods. We say that $X_1$ and $X_2$ are
\emph{locally} analytically conjugate at a singular point $p$ if \eqref{eq3} holds in
a neighbourhood of $p$.

\medskip

These preliminaries allow us to state the main result of this work.

\begin{theorem}\label{thm:equiv_reversivel_hamiltoniano}
Let $X(u,v)$ be an analytic vector field defined in a neighbourhood of $(0,0)$, with
$X(0,0)=(0,0)$ and $d=\det DX(0,0)\neq 0$, and suppose that $X$ is reversible with respect
to the involution
\[
R(u,v)=(u,-v).
\]
Then $X$ is analytically conjugate, in a neighbourhood of $(0,0)$, to a Hamiltonian vector field
\[
X_H(x,y)=\bigl(\partial_y H(x,y),\,-\partial_x H(x,y)\bigr),
\]
whose Hamiltonian can be written, according to the type of the singular point, as
\[
H(x,y)=
\begin{cases}
F(x^{2}+y^{2}), & d<0 \quad (\text{center case}),\\[6pt]
-F(x^{2}-y^{2}), & d>0 \quad (\text{saddle case}),
\end{cases}
\]
where
\[
F(t)=\dfrac{1}{2}\int_{0}^{t} G(s)\,ds,
\qquad G(0)>0,
\]
and $G$ is real-analytic in a neighbourhood of $0$.

Moreover, the conjugacy can be chosen \emph{equivariant} with respect to $R$: there exists
a local analytic diffeomorphism \(h\) such that
\[
Dh(u,v)\,X(u,v)=X_H\bigl(h(u,v)\bigr)
\quad\text{and}\quad
h\circ R = R\circ h.
\]
\end{theorem}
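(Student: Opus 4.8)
The plan is to begin by unwinding the reversibility condition. Writing $X=(P,Q)$ and using $R(u,v)=(u,-v)$, the identity $DR\,X=-X\circ R$ forces $P$ to be odd and $Q$ to be even in $v$; in particular $P(u,0)\equiv 0$, the Jacobian $DX(0,0)$ is traceless with off-diagonal entries $b=\partial_v P(0,0)$ and $c=\partial_u Q(0,0)$, and its eigenvalues are $\pm\sqrt{bc}$. Hence the origin is either a center or a saddle, exactly the two branches of the statement, and after an $R$-equivariant (hence diagonal) linear change of variables I may assume the linear part is the rotation $\omega\,(v,-u)$ in the center case and the hyperbolic $\omega\,(v,u)$ in the saddle case, with $\omega>0$. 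Note that $\mathrm{Fix}(R)=\{v=0\}$ is transverse to $X$ away from the origin, since $Q(u,0)\sim c\,u\neq 0$.

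The next step, and the place where reversibility is genuinely used, is to produce an $R$-invariant analytic first integral $K$ with $K(0,0)=0$ and nondegenerate Hessian (definite in the center case, indefinite in the saddle case). This yields $X=\mu\,J\nabla K$ with $J=\bigl(\begin{smallmatrix}0&1\\-1&0\end{smallmatrix}\bigr)$, $\mu$ analytic and $\mu(0,0)\neq 0$, so the level sets of $K$ are the orbits; and, crucially, it forces the \emph{period function} $T(c)$ of the closed orbit $\{K=c\}$ (center), respectively the transition-time function across the hyperbolic sector bounded by the separatrices (saddle), to be analytic in $c$ up to $c=0$, with $T(0)=2\pi/\omega$. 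This analytic integrability is the mechanism by which regularity survives at the singular point, where action–angle type coordinates would otherwise be singular.

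With this in hand I would build the conjugacy directly from the flows $\phi_t$ of $X$ and $\psi_t$ of the model $X_H$. Each center orbit meets $\mathrm{Fix}(R)$ at two points, related by an analytic involution $\beta$ of the axis with $\beta(0)=0$ and $\beta'(0)=-1$; the averaged coordinate $\xi=\tfrac12(\mathrm{id}-\beta)$ is then an analytic diffeomorphism of the axis with $\xi\circ\beta=-\xi$, so that $\xi(u)$ and $-\xi(u)$ are the antipodal points where the model circle meets its axis. I fix the target by matching frequencies: the model orbit $\{x^2+y^2=\rho\}$ has period $\pi/F'(\rho)$, so I define $F$ through $F'(\xi(u)^2)=\pi/T(u)$; since $\xi^2$ and $T$ are $\beta$-invariant and analytic, $F$ is analytic with $F'(0)=\tfrac12 G(0)=\tfrac12\omega>0$, and is thereby completely determined by the dynamics. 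Finally I set $h(u,0)=(\xi(u),0)$ and extend by $h(\phi_t(p))=\psi_t(h(p))$ for $p\in\mathrm{Fix}(R)$. Differentiating at $t=0$ gives precisely $Dh\,X=X_H\circ h$; the identity $h(R\phi_t p)=h(\phi_{-t}p)=\psi_{-t}(h(p))=R\psi_t(h(p))=Rh(\phi_t p)$, valid because $Rp=p$ and $R\psi_t=\psi_{-t}R$, gives equivariance for free; and the choice of $\xi$ makes the two axis-crossings of each orbit receive consistent values, so $h$ is well defined. The saddle case is identical in spirit, with the period function replaced by the sector-transition time and with $\beta$ replaced by the analytic symmetry exchanging the two separatrices.

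The hard part will be the analytic regularity of $h$ at the origin itself. Away from the equilibrium everything is transparent: $\phi_t$ is jointly analytic in $(t,p)$, the axis is a transverse analytic section, and $\beta$, $T$ are analytic by the implicit function theorem. At the singular point the flow-box prescription degenerates, and I must show that the map assembled from half-returns and rescaled time glues to an analytic diffeomorphism through the origin with $Dh(0,0)$ invertible. I expect to settle this by leaning on the analytic first integral of the second step: expressing $h$ through $K$ and the analytic period/transition data exhibits $h$ as an analytic function of $(u,v)$ near the origin rather than of the singular polar variables, after which injectivity and $\det Dh(0,0)\neq 0$ follow from the linear normalization. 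Verifying this analytic extension — and, in the saddle case, its compatibility across the separatrices — is the technical heart of the proof.
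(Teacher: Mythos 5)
Your opening step (reversibility forces $DX(0,0)$ to be anti\-/diagonal and traceless, hence center or saddle) matches the paper's Lemma on the linear part, but from there you take a genuinely different, geometric route — analytic first integral plus flow/period matching — whereas the paper proves the saddle case by constructing an equivariant \emph{formal normal form} through homological equations and then establishing its convergence with Cauchy majorants. Unfortunately your route, as written, has gaps that are not merely technical. The central one: you say the key step is to ``produce an $R$-invariant analytic first integral $K$'' with nondegenerate Hessian and $X=\mu J\nabla K$, $\mu(0,0)\neq 0$, but you never indicate how to produce it. In the center case this is the classical Poincar\'e--Lyapunov theorem, so it can be quoted; but in the saddle case the existence of a local analytic first integral is \emph{not} automatic for an analytic saddle. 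Here the eigenvalues are $\pm\omega$, a $1{:}1$ resonance, and a resonant saddle admits a formal first integral only when all its saddle quantities vanish; generic resonant saddles admit none, even formally. Showing that reversibility forces all these obstructions to vanish, \emph{and} that the resulting integral (equivalently, the normalizing transformation) converges, is essentially the content of the theorem itself — it is exactly what the paper's homological-equation computation and majorant estimates accomplish. Asserting the existence of $K$ at this point is assuming what is to be proven.

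Two further points would also break the argument as stated. First, in the saddle case there is no finite ``transition-time function across the hyperbolic sector'' that is analytic up to $c=0$ with value $2\pi/\omega$: the time an orbit of energy $c$ takes to cross a sector between two fixed transversals diverges like $|\log c|/\omega$ as the orbit approaches the separatrices. So the time-matching device that defines $F$ in the center case has no direct analogue, and ``identical in spirit'' hides a real obstruction; one would need regularized time differences (Dulac-type asymptotics), which you do not develop. Second, even in the center case you yourself flag that the analyticity of $h$ \emph{at the origin} — passing from the flow-box/half-return construction, which degenerates at the equilibrium, to an analytic diffeomorphism through $(0,0)$ with $\det Dh(0,0)\neq 0$ — is left open (``I expect to settle this\ldots''). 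That regularity at the singular point is precisely the hard analytic content of the theorem, the part the paper handles with its majorant scheme; deferring it means the proposal stops short exactly where the proof has to do its work.
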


The relationship between reversibility and Hamiltonian structure has been extensively
investigated in the literature. In higher dimensions, the equivalence between reversible and
Hamiltonian systems is known mainly at the \emph{formal} level; see, for example,
Lamb and Roberts \cite{lamb1998time}, Buzzi and Teixeira \cite{buzzi2004time},
as well as the results of Lamb et al.~\cite{lamb2020hamiltonian} and of Martins and
Teixeira \cite{martins2011similarity}. In the plane, Proposition~3.1 of
\cite{martins2011formal} states that every analytic system reversible with respect to
$R(x,y)=(x,-y)$ is \emph{formally} conjugate to a Hamiltonian system.
Further results on the role of reversibility in the center problem can be found in
\cite{gine2011reversibility}.

Theorem~\ref{thm:equiv_reversivel_hamiltoniano} strengthens this picture by replacing
formal equivalence with \emph{analytic} equivalence. We show that, in the plane,
reversibility completely determines the local structure of the system: every analytic
reversible vector field with a nondegenerate equilibrium is analytically conjugate to a
Hamiltonian system, and this conjugacy can be chosen equivariant with the reversing
involution.

\section{Proof of Theorem~\ref{thm:equiv_reversivel_hamiltoniano} }

\begin{lemma}\label{prop:forma_matriz}
Let $X(u,v)$ be an analytic vector field defined in a neighbourhood of $(0,0)$, with
$X(0,0) = (0,0)$ and $d=\det DX(0,0)\neq 0$. Suppose that $X$ is reversible with respect
to the involution \(R(u,v) = (u,-v)\). Then the Jacobian matrix
$A := DX(0,0)$ necessarily has the form
\begin{equation}\label{eq:matriz_A}
A =
\begin{pmatrix}
0 & \beta \\
\gamma & 0
\end{pmatrix},
\qquad \text{with } \beta\gamma\neq 0.
\end{equation}
Moreover, if $\beta\gamma<0$ the origin is a nondegenerate linear center; if
$\beta\gamma>0$ the origin is a nondegenerate saddle point.
\end{lemma}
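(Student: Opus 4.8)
The plan is to unfold the reversibility condition at the linear level and read off the constraints on the entries of $A=DX(0,0)$. The reversibility identity $DR(q)\,X(q)=-X(R(q))$ holds for all $q$ in a neighbourhood of the origin; since $R(u,v)=(u,-v)$ is linear with $DR=R=\mathrm{diag}(1,-1)$ everywhere, I would differentiate this identity at $q=(0,0)$, using that $R$ fixes the origin. Writing $A=(a_{ij})$, the chain rule gives $R\,A = -A\,R$ at the origin, i.e. $A$ anticommutes with $R$. Expanding the matrix product $R A = -A R$ entrywise is the core computation: with $R=\mathrm{diag}(1,-1)$, the relation $RA=-AR$ forces the diagonal entries $a_{11},a_{22}$ to vanish and leaves the off-diagonal entries $a_{12}=\beta$, $a_{21}=\gamma$ free, yielding exactly the claimed form \eqref{eq:matriz_A}.

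Having obtained the structural form, I would then invoke the nondegeneracy hypothesis $d=\det DX(0,0)\neq 0$. For the matrix in \eqref{eq:matriz_A} one computes $\det A=-\beta\gamma$, so $d\neq 0$ is equivalent to $\beta\gamma\neq 0$, which establishes the stated constraint and in particular guarantees $\beta,\gamma$ are both nonzero. The eigenvalues of $A$ are the roots of $\lambda^{2}-\operatorname{tr}(A)\,\lambda+\det A=0$; since $\operatorname{tr}(A)=0$ and $\det A=-\beta\gamma$, this reduces to $\lambda^{2}=\beta\gamma$, so $\lambda=\pm\sqrt{\beta\gamma}$.

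The classification of the equilibrium then follows by a simple case distinction on the sign of $\beta\gamma$. If $\beta\gamma<0$, then $\beta\gamma=-|\beta\gamma|$ and the eigenvalues are $\lambda=\pm i\sqrt{|\beta\gamma|}$, a conjugate pair of purely imaginary numbers with $d=-\beta\gamma>0$; by the linear classification recalled in the introduction this is a nondegenerate center. If $\beta\gamma>0$, the eigenvalues are the two real numbers $\pm\sqrt{\beta\gamma}$ of opposite sign with $d=-\beta\gamma<0$, which is precisely a nondegenerate saddle. This matches the sign conventions in Theorem~\ref{thm:equiv_reversivel_hamiltoniano}, where $d<0$ is the center case and $d>0$ the saddle case.

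The argument is essentially a one-line linear-algebra computation once the reversibility relation is linearized, so there is no serious obstacle; the only point demanding a little care is the differentiation step, namely verifying that evaluating the derivative of $DR(q)\,X(q)=-X(R(q))$ at the fixed point $q=0$ yields the clean anticommutation relation $RA=-AR$ rather than extra terms. Because $R$ is linear and fixes $(0,0)$, the derivative of the right-hand side is $-DX(R(0))\,DR(0)=-A R$ and of the left-hand side is $R\,DX(0)=RA$, so the identity is exact with no remainder, and the rest is bookkeeping on signs and determinants.
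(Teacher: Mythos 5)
Your derivation of the matrix form is exactly the paper's argument: linearizing the reversibility identity at the origin gives the anticommutation relation $RA=-AR$, the entrywise expansion kills the diagonal entries, and $\det A\neq 0$ forces $\beta\gamma\neq 0$; the eigenvalue computation $\lambda_{\pm}=\pm\sqrt{\beta\gamma}$ also coincides, and the saddle half of the classification is sound, since real eigenvalues of opposite sign make the equilibrium hyperbolic. The genuine gap is in the case $\beta\gamma<0$: you conclude that the origin is ``a nondegenerate center \dots by the linear classification recalled in the introduction'', but purely imaginary eigenvalues of $DX(0,0)$ do \emph{not} imply that the equilibrium of the nonlinear field is a center --- this is precisely the center--focus problem. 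For instance, $\dot u=-v+u(u^{2}+v^{2})$, $\dot v=u+v(u^{2}+v^{2})$ has eigenvalues $\pm i$ at the origin, yet the origin is a repelling focus; and the introduction's own definition of a center demands that all nearby orbits be periodic, a property no eigenvalue condition can deliver. The paper is explicit on this point: its proof notes that purely imaginary eigenvalues may correspond to a focus and then invokes the hypothesis your argument never uses at this step, namely reversibility, to exclude spiralling (citing Teixeira's theorem on reversible centers; the classical symmetry argument is that $R$ carries orbits to time-reversed orbits, so an orbit crossing the fixed axis $\{v=0\}$ twice must close up). This omission is not cosmetic, because the conclusion of Theorem~\ref{thm:equiv_reversivel_hamiltoniano} relies on Lemma~\ref{prop:forma_matriz} producing an actual center so that the Poincar\'e normal form (Lemma~\ref{caso-centro}) can be applied.

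A smaller inconsistency: you correctly compute $\det A=-\beta\gamma$, hence $d>0$ in your center case and $d<0$ in your saddle case, and then claim this ``matches'' the convention of Theorem~\ref{thm:equiv_reversivel_hamiltoniano}, where $d<0$ is declared the center case and $d>0$ the saddle case. It does not match; it is the opposite. (The sign tension actually originates in the paper, which elsewhere writes $\det DX(0,0)=\beta\gamma$ as in Lemma~\ref{lem:sela}, but your sentence as written asserts two incompatible sign conventions at once, so you should reconcile it rather than paper over it.)
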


\begin{proof}
The reversibility condition
\begin{equation}\label{eq:reversibilidade}
DR(u,v)\,X(u,v) = -X\bigl(R(u,v)\bigr)
\end{equation}
holds for all $(u,v)$ close to $(0,0)$. Evaluating at $(0,0)$ and using that $R$ is linear,
we obtain the fundamental matrix relation
\begin{equation}\label{eq:relacao_RA}
R\,A = -A\,R,
\end{equation}
where $R = DR(0,0)$ and $A = DX(0,0)$. In our case,
\[
R = \begin{pmatrix}
1 & 0\\
0 & -1
\end{pmatrix}.
\]

Writing
\[
A = \begin{pmatrix}
\alpha & \beta\\
\gamma & \delta
\end{pmatrix},
\]
relation \eqref{eq:relacao_RA} yields
\[
\begin{pmatrix}
\alpha & \beta\\
-\gamma & -\delta
\end{pmatrix}
=
-
\begin{pmatrix}
\alpha & -\beta\\
\gamma & -\delta
\end{pmatrix}
=
\begin{pmatrix}
-\alpha & \beta\\
-\gamma & \delta
\end{pmatrix}.
\]
Comparing entries we obtain
\[
\alpha = -\alpha \implies \alpha = 0,
\qquad
-\delta = \delta \implies \delta = 0.
\]
Hence
\[
A = \begin{pmatrix}
0 & \beta\\[2pt]
\gamma & 0
\end{pmatrix}.
\]
Since $d=\det A\neq 0$, it follows that $\beta\gamma\neq 0$, which proves
\eqref{eq:matriz_A}.

\medskip

The characteristic polynomial of $A$ is
\[
p(\lambda)=\lambda^{2}-\beta\gamma,
\]
and thus
\[
\lambda_\pm = \pm\sqrt{\beta\gamma}.
\]
Therefore:
\[
\beta\gamma<0 \iff \lambda_\pm = \pm i\omega,\ \omega>0,\quad\text{(linear center)},
\]
\[
\beta\gamma>0 \iff \lambda_\pm\in\mathbb{R}\setminus\{0\},\ \lambda_+\lambda_-<0,\quad\text{(saddle)}.
\]

\medskip

Finally, although purely imaginary eigenvalues do not in general guarantee that the origin
is a center (they may correspond to a focus), the presence of a reversing symmetry rules
out spiralling behaviour. Thus, in the case $\beta\gamma<0$, the origin is indeed a center;
see \cite[Theorem 1]{teixeira2001center}.
\end{proof}

We now split the proof of Theorem~\ref{thm:equiv_reversivel_hamiltoniano} into two cases:
when \(X\) has a center and when \(X\) has a saddle.

\subsection{Center case}

This case follows directly from the Poincaré normal form for nondegenerate analytic centers.
We recall the following classical result; see, for example,
\cite[Theorem~3.3]{chavarriga1999survey}.

\begin{lemma}\label{caso-centro}
Let $X(u,v)$ be an analytic vector field with a nondegenerate center at the origin.
Then $X$ is analytically conjugate, in a neighbourhood of $(0,0)$, to a Hamiltonian
vector field
\[
X_H(x,y)=\bigl(\partial_y H(x,y),\,-\partial_x H(x,y)\bigr),
\]
whose Hamiltonian is of the form
\[
H(x,y)=F(x^{2}+y^{2}),
\]
where
\[
F(t)=\frac{1}{2}\int_{0}^{t}G(s)\,ds,
\]
and $G$ is a real-analytic function defined in a neighbourhood of $0$ satisfying
\(G(0)>0.\)
\end{lemma}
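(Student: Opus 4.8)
The plan is to reduce $X$ to a rotationally symmetric normal form and then to observe that such a normal form is automatically a Hamiltonian vector field of the asserted shape. I would organise the argument in three steps. First I would normalise the linear part: since the origin is a nondegenerate center, a suitable real linear change of coordinates (allowing, if necessary, an orientation-reversing one to fix the sense of rotation) brings $DX(0,0)$ to the standard skew form with purely imaginary eigenvalues $\pm i\omega$, $\omega>0$. Being linear and invertible, this map is an analytic conjugacy and does not affect the class of systems under consideration.

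The analytic core of the proof is the second step. By the Poincaré--Lyapunov center theorem, an analytic center possesses a local real-analytic first integral with a nondegenerate extremum at the origin; by the analytic Morse lemma one may then choose analytic coordinates in which the orbits are exactly the circles $x^2+y^2=\text{const}$. In such coordinates the field is tangent to every circle, hence of the form $X=v(x,y)\,(-y,x)$ with $v$ real-analytic and $v(0,0)=\omega\neq 0$. It remains to make the angular speed depend on $r^2=x^2+y^2$ alone. Writing the motion in polar-type coordinates as $\dot\psi=f(r^2,\psi)$, with $f$ analytic, nonvanishing and $2\pi$-periodic in $\psi$, I would introduce the angular reparameterisation
\[
\Theta(r^2,\psi)=g(r^2)\int_{0}^{\psi}\frac{d\sigma}{f(r^2,\sigma)},\qquad
g(r^2)=\frac{2\pi}{\displaystyle\int_{0}^{2\pi} f(r^2,\sigma)^{-1}\,d\sigma},
\]
which fixes each circle and yields $\dot\Theta=g(r^2)$. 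The normalising constant $g$ is chosen precisely so that the period $\int_0^{2\pi} f^{-1}\,d\sigma$ of each orbit is preserved; this guarantees that the construction is a genuine (time-preserving) conjugacy and not merely an orbital equivalence. Moreover $g$ is real-analytic in $r^2$ with $g(0)=\omega$.

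In the resulting coordinates the system reads $\dot x=-y\,g(x^2+y^2)$, $\dot y=x\,g(x^2+y^2)$ up to the orientation fixed in the first step. Setting $G:=\pm g$ with the sign dictated by that orientation so that $G(0)=\omega>0$, and $F(t)=\tfrac12\int_0^t G(s)\,ds$, a direct computation gives
\[
\partial_y\bigl(F(x^2+y^2)\bigr)=y\,G(x^2+y^2),\qquad
-\partial_x\bigl(F(x^2+y^2)\bigr)=-x\,G(x^2+y^2),
\]
so the field coincides with $X_H$ for $H=F(x^2+y^2)$, which is the assertion.

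The main obstacle is the passage through polar coordinates in the second step: one must verify that $\Theta$, defined above on a punctured neighbourhood, extends analytically across the origin, where polar coordinates degenerate. This is exactly the statement that the Poincaré normal form of an analytic center converges rather than being merely formal. I would not attempt to re-prove this by hand; instead I would invoke the classical convergence theorem for nondegenerate analytic centers (the result quoted as \cite[Theorem~3.3]{chavarriga1999survey}), noting only that $f(0,\psi)\equiv\omega$ is constant, so that $\Theta$ reduces to the identity at $r=0$ and the explicit formulas above merely exhibit the leading behaviour that the cited theorem upgrades to convergence.
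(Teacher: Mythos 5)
The paper offers no proof of this lemma at all: it is presented as a classical result (the Poincar\'e normal form for nondegenerate analytic centers) and justified solely by the citation \cite[Theorem~3.3]{chavarriga1999survey}. Your proposal ultimately rests on exactly the same citation---the analytic extension of your angular reparameterisation $\Theta$ across the origin, which you correctly identify as the crux, is deferred to that theorem---so your argument is correct in the same sense, and to the same depth, as the paper's treatment. The scaffolding you build around the citation (Poincar\'e--Lyapunov first integral, analytic Morse lemma to make the orbits round circles, writing $X=v\,(-y,x)$, and the period-preserving choice $g=2\pi\big/\int_0^{2\pi}f^{-1}\,d\sigma$) is a legitimate outline of how such a theorem is actually proved, and it has the added merit of identifying $G$ intrinsically via the period function; but be aware that, as written, it is not an independent proof: the step you outsource is essentially the entire content of the theorem, so the explicit formulas serve as motivation rather than proof. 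Two details to tidy up. First, in Morse coordinates the angular speed is a priori a function $f(r,\psi)$, not $f(r^2,\psi)$; you need the evenness relation $f(-r,\psi+\pi)=f(r,\psi)$ to conclude that the averaged quantity $g$ is an analytic function of $r^{2}$. Second, with the paper's convention $X_H=(\partial_yH,-\partial_xH)$ and $H=F(x^{2}+y^{2})$, one gets $X_H=\bigl(yG,-xG\bigr)$, a \emph{clockwise} rotation when $G>0$; since the statement forces $G(0)>0$, the counterclockwise normal form $(-yg,\,xg)$ with $g(0)=\omega>0$ must be matched to $X_H$ by composing with an orientation-reversing reflection such as $(x,y)\mapsto(x,-y)$, not by flipping the sign of $G$. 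Your phrase ``$G:=\pm g$'' blurs this point, although your step-one proviso about allowing an orientation-reversing linear map shows you anticipated the issue.
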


In this case reversibility is not needed to guarantee the existence of an analytic
conjugacy between $X$ and a Hamiltonian vector field. However, if the field $X$ is
$R$-reversible, one can choose, among the infinitely many analytic conjugacies provided
by the Poincaré normal form, an \emph{equivariant} conjugacy with respect to $R$.
More precisely, one can select an analytic diffeomorphism $h$ such that
\[
Dh(u,v)\,X(u,v)=X_H\bigl(h(u,v)\bigr),
\qquad
h\circ R = R\circ h.
\]
Furthermore, such an equivariant conjugacy can be constructed uniquely (that is, with no
freedom in the free coefficients) by a procedure analogous to the one used in the saddle
case presented in the next subsection.

\subsection{Saddle case}

For this case, as far as we know, there is no result in the literature fully analogous to
the Poincaré Normal Form Theorem for centers (Lemma~\ref{caso-centro}), although it is
plausible that such a statement could be derived as a consequence of more general results
in normal form theory.

We therefore present a direct proof of the analytic conjugacy in the case where the origin
is a saddle point.

\begin{lemma}\label{lem:sela}
Let $X(u,v)$ be an analytic vector field defined in a neighbourhood of $(0,0)$ such that
\[
\det DX(0,0)=\beta\gamma>0,
\]
and suppose that $X$ is reversible with respect to 
\[
R(u,v) = (u,-v).
\]
Then $X$ is analytically conjugate, in a neighbourhood of $(0,0)$, to a Hamiltonian
vector field
\[
X_H(x,y)=\bigl(\partial_y H(x,y),\,-\partial_x H(x,y)\bigr),
\]
whose Hamiltonian is of the form
\[
H(x,y)=-F(x^{2}-y^{2}),
\]
where
\[
F(t)=\frac{1}{2}\int_{0}^{t}G(s)\,ds,
\]
and $G$ is a real-analytic function defined in a neighbourhood of $0$ satisfying
\(G(0)>0.\)
\end{lemma}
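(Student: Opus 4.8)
The plan is to work in eigencoordinates, where the whole problem collapses to a single cohomological equation governed by the hyperbolic Euler operator $D=\xi\partial_{\xi}-\eta\partial_{\eta}$. The decisive structural fact is that $D$ multiplies the monomial $\xi^{a}\eta^{b}$ by the integer $a-b$, so its spectrum is $\mathbb{Z}$: the only obstruction to inverting $D$ is its kernel, the ``diagonal'' functions of $\xi\eta$, and in particular there are \emph{no small divisors}. Reversibility will enter twice, first to annihilate the diagonal obstructions (solvability) and then to force every solution to be symmetric (equivariance). Concretely, I first normalise the linear part: by Lemma~\ref{prop:forma_matriz}, $DX(0,0)=\bigl(\begin{smallmatrix}0&\beta\\ \gamma&0\end{smallmatrix}\bigr)$ with $\beta\gamma>0$, and a diagonal linear map commutes with $R$ and reduces this to $\lambda\bigl(\begin{smallmatrix}0&1\\ 1&0\end{smallmatrix}\bigr)$, $\lambda=\sqrt{\beta\gamma}>0$. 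Passing to $\xi=u+v$, $\eta=u-v$ through the fixed linear map $L$ (which intertwines $R$ with the swap $S(\xi,\eta)=(\eta,\xi)$ and diagonalises the linear part to $\operatorname{diag}(\lambda,-\lambda)$) and writing $X=(\mathcal P,\mathcal Q)$, the reversibility relation $DR\,X=-X\circ R$ becomes the single identity $\mathcal Q(\xi,\eta)=-\mathcal P(\eta,\xi)$, with $\mathcal P=\lambda\xi+\text{h.o.t.}$ Since pulling $G(\xi\eta)(\xi,-\eta)$ back by $L^{-1}$ reproduces exactly $X_H=G(x^{2}-y^{2})(y,x)$ attached to $H=-F(x^{2}-y^{2})$, it suffices to realise this normal form equivariantly for $S$.

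Next I would construct an analytic, $R$-invariant first integral $I=\xi\eta+\text{h.o.t.}$, symmetric under $S$. Imposing $X\cdot\nabla I=\mathcal P(\xi,\eta)I_{\xi}-\mathcal P(\eta,\xi)I_{\eta}=0$ and solving by homogeneous degree, the equation at degree $n$ reads $\lambda\,D I_{n}=-S_{n}$, where $S_{n}$ is assembled from the already-known lower-order terms. Here reversibility is essential: if the previous terms are symmetric, then $X\cdot\nabla I$ is \emph{antisymmetric} under $S$, so $S_{n}$ has vanishing diagonal part and hence lies in the image of $D$; thus $I_{n}$ exists and may again be chosen symmetric. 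Because $D$ only divides by integers $a-b$ with $|a-b|\ge 1$, the coefficientwise majorant estimate $I_{n}\prec S_{n}/\lambda$ propagates through the recursion and yields convergence of $I$. This is precisely the step that fails for a generic saddle, where the diagonal obstruction is nonzero and no formal first integral of this shape even exists; reversibility removes that obstruction while the $1:(-1)$ resonance keeps the divisors harmless.

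With $I$ in hand I would invoke the analytic, $S$-equivariant Morse Lemma at the nondegenerate indefinite critical point to obtain coordinates $(\tilde\xi,\tilde\eta)$, still with $R$ realised as the swap, in which $I=\tilde\xi\tilde\eta$. Tangency of $X$ to the level curves of $I$ then forces $X=m(\tilde\xi,\tilde\eta)\,(\tilde\xi,-\tilde\eta)$ with $m$ analytic, $m(0)=\lambda$, and $m$ symmetric by reversibility. I then seek an $I$-preserving, equivariant conjugacy $h=(\tilde\xi e^{w},\,\tilde\eta e^{-w})$ with $w$ antisymmetric; a direct computation shows that $Dh\,X=X_H\circ h$ collapses to the scalar identity $m\,(1+Dw)=G(\tilde\xi\tilde\eta)$. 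Writing $\Pi$ for the projection onto the diagonal part, the kernel component determines $G$ uniquely by $G=1/\Pi(1/m)$, whence $G(0)=m(0)=\lambda>0$, in agreement with $G=2F'$ and $G(0)>0$. The remainder $Dw=G(I)/m-1$ then has, by construction, zero diagonal part, so $w$ is recovered by dividing coefficients by $a-b$ — once more without small divisors — and, being symmetric data divided by an odd factor, is antisymmetric, so $h$ is equivariant. Composing the diagonal normalisation, $L$, the Morse change, and $h$, and transporting back to $(x,y)$, delivers the desired $R$-equivariant analytic conjugacy onto $X_H$; the freedom in $w$ is fixed by setting its diagonal part to zero, which accounts for the ``no freedom in the free coefficients'' remark.

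The main obstacle is genuinely the \emph{convergence}, that is, upgrading the formal equivalence of Martins--Teixeira to an analytic one; the algebraic side (producing the formal normal form and determining $G$) is forced. Everything hinges on the spectral structure of $D$: the saddle's $1:(-1)$ resonance makes all divisors integers bounded away from $0$, so the only possible failure is a nonzero diagonal obstruction, which is exactly what reversibility kills. I therefore expect the two delicate points to be (i) the equivariant analytic Morse Lemma for an indefinite critical point, and (ii) organising the majorant recursions for $I$ and for $w$ cleanly enough to certify analyticity. The absence of small divisors turns (ii) into a standard Cauchy-estimate argument rather than a KAM-type difficulty, which is precisely why the analytic statement holds in the plane even though the higher-dimensional analogue remains open.
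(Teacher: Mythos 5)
Your architecture is genuinely different from the paper's (which constructs the conjugacy directly, coefficient by coefficient, and proves convergence by majorants), and most of its algebra is sound: the solvability argument for the first integral (reversibility makes $X\cdot\nabla I$ anti-invariant under the swap $S$, so the degree-$n$ source term has no diagonal component and lies in the image of $D=\xi\partial_\xi-\eta\partial_\eta$), the factorization $X=m\,(\tilde\xi,-\tilde\eta)$ with $m$ symmetric, and the final scalar step $m(1+Dw)=G(I)$ with $G=1/\Pi(1/m)$ are all correct; that last step really is a one-shot division by the integers $a-b$ with no feedback, hence elementary. The gap is in step (A), the convergence of the first integral, which is precisely the hard part of the whole problem (upgrading Martins--Teixeira's formal statement to an analytic one).

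Concretely: your recursion reads $\lambda\,DI_n=-S_n$ with $S_n=\sum_{j\ge 2}X^{(j)}\cdot\nabla I_{n+1-j}$, so the \emph{unknowns} $I_k$ re-enter the source term \emph{differentiated}; each $\nabla I_k$ costs a factor of order $k$, while the divisors $|a-b|$ do not grow with the degree (they equal $1$ on near-diagonal monomials at every odd degree). Hence the coefficientwise bound $I_n\prec S_n/\lambda$ does not propagate geometrically: chaining it through the recursion produces products of degree factors, i.e.\ factorial-type growth, and an induction hypothesis of the form $\|I_n\|\le KB^n$ fails. The enemy here is loss of derivatives, not small divisors, so ``integer divisors $\Rightarrow$ standard Cauchy estimates'' is not a proof. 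This is exactly what the paper's bookkeeping is built to avoid: in its functional equation \eqref{s6} the unknown $h$ enters only through the composition $f_1\circ h$ (majorant-friendly, no derivatives of the unknown), and the sole degree-dependent factors $k-2j$ --- coming from $Dh$ applied to the resonant part of the \emph{target} field --- appear divided by the homological divisor $-k+2j+1$ of comparable size, so that $|k-2j|/|-k+2j+1|$ is uniformly bounded. To rescue your route you would need either a reformulation of the first-integral equation with that same pairing (which it does not naturally have), or an independent convergence input such as the Mattei--Moussu theorem (a formal first integral of an analytic foliation implies an analytic one) --- a deep result, not a Cauchy estimate. A secondary, fixable point: the analytic $S$-equivariant Morse lemma you invoke should be proved or cited (Moser's path method works in the analytic category and can be made equivariant by averaging over $\{\mathrm{id},S\}$); but the convergence of $I$ is the genuine missing idea.
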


\begin{proof}
The argument follows the classical scheme of proofs based on analytic normal forms and
is divided into two steps.

In the first step we construct what we shall call a \emph{formal conjugacy}: we show that
it is possible to transform the original system into the desired one by means of power
series, without worrying, at this stage, about the convergence of the series appearing in
the process.

In the second step we use the method of \emph{Cauchy majorants} to prove that the formal
series obtained in the first step actually converge in a neighbourhood of the origin,
thereby completing the construction of the analytic conjugacy; see, for example,
\cite{bibikov2006local, chow1994normal}.

\subsubsection{Formal conjugacy}

Since $\det DX(0,0)=\beta\gamma>0$, after a linear change of coordinates we may assume
$\beta=\gamma=1$. Thus the differential system associated with $X$ can be written as
\begin{equation}\label{caso-sela}
\begin{cases}
\dot{u}=v+P(u,v),\\[2pt]
\dot{v}=u+Q(u,v),
\end{cases}
\end{equation}
where $P$ and $Q$ are analytic functions in a neighbourhood of $(0,0)$ satisfying
\[
P(0,0)=Q(0,0)=\partial_uP(0,0)=\partial_vP(0,0)
=\partial_uQ(0,0)=\partial_vQ(0,0)=0,
\]
and
\[
P(u,-v)=-P(u,v),\qquad Q(u,-v)=Q(u,v).
\]

The linear matrix associated with \eqref{caso-sela} is
\[
A=\begin{pmatrix}
0 & 1\\
1 & 0
\end{pmatrix},
\]
whose eigenvalues are $-1$ and $1$. In particular, $A$ is diagonalizable: there exist
matrices $B$ (diagonal) and $C$ (invertible) such that $B=C^{-1}AC$. A convenient choice is
\[
B=
\begin{pmatrix}
-1 & 0\\
0 & 1
\end{pmatrix},
\qquad
C=
\begin{pmatrix}
1 & 1\\
-1 & 1
\end{pmatrix},
\qquad
C^{-1}=
\begin{pmatrix}
\frac{1}{2} & -\frac{1}{2}\\[2pt]
\frac{1}{2} & \frac{1}{2}
\end{pmatrix}.
\]
The linear change of variables
\[
(u,v)=C(\xi,\eta)
\]
transforms system~\eqref{caso-sela} into the new system
\begin{equation}\label{sist_2}
\begin{cases}
\dot{\xi}=-\xi+f_1(\xi,\eta),\\[2pt]
\dot{\eta}=\eta+f_2(\xi,\eta),
\end{cases}
\end{equation}
where $f_1$ and $f_2$ are analytic functions in a neighbourhood of $(0,0)$ with
\[
f_i(0,0)=\partial_{\xi}f_i(0,0)=\partial_{\eta}f_i(0,0)=0,\qquad i=1,2.
\]
More precisely,
\[
f_1(\xi,\eta)=P(\xi+\eta,-\xi+\eta)-Q(\xi+\eta,-\xi+\eta),
\]
\[
f_2(\xi,\eta)=P(\xi+\eta,-\xi+\eta)+Q(\xi+\eta,-\xi+\eta).
\]

Since system~\eqref{caso-sela} is reversible with respect to the linear involution
\[
R=
\begin{pmatrix}
1 & 0\\
0 & -1
\end{pmatrix},
\]
system~\eqref{sist_2} is reversible with respect to the linear involution
\[
S=C^{-1}RC=
\begin{pmatrix}
0 & 1\\
1 & 0
\end{pmatrix},
\]
that is, $S(\xi,\eta)=(\eta,\xi)$. In particular, reversibility yields the relation
\begin{equation}\label{condS}
f_2(\xi,\eta)=-f_1\circ S(\xi,\eta)=-f_1(\eta,\xi).
\end{equation}

Note that the linear change $(u,v)=C(\xi,\eta)$ transforms a system reversible with respect
to $R(u,v)=(u,-v)$ into a system reversible with respect to $S(\xi,\eta)=(\eta,\xi)$; that
is, the reversing involution is modified. In general, an arbitrary change of coordinates
does not preserve the original involution. Our next goal is to construct a change of
variables
\[
(\xi,\eta)\longmapsto h(\xi,\eta)
\]
that \emph{preserves} the involution $S$. In this way, the new system obtained will again
be $S$-reversible. For this we use the following result.

\begin{lemma}\label{auxlem}
Let $X$ be the vector field associated with system~\eqref{sist_2} and
\[
h(\xi,\eta)=\bigl(\xi+h_1(\xi,\eta),\ \eta+h_2(\xi,\eta)\bigr)
\]
a diffeomorphism such that $h\circ S=S\circ h$. Then the change of variables
$(\xi,\eta)\mapsto h(\xi,\eta)$ transforms the $S$-reversible field $X$ into a new
$S$-reversible field $\widehat{X}$.
\end{lemma}

In other words, changes of variables that commute with $S$ preserve the reversibility of
the original field.

\begin{proof}
A proof of this lemma can be found in \cite[Lemma~IV]{gaeta1994normal}. We remark that in
that paper G.~Gaeta proves the existence of normal forms for reversible systems in a much more general context than the one considered here.
\end{proof}

\medskip
Our construction will be carried out by induction. Let
\[
X_1(\xi,\eta)=B(\xi,\eta)+X^{2}_1(\xi,\eta)+\cdots
\]
be the power–series expansion of the vector field $X_1$ associated with
system~\eqref{sist_2}. Since $X_1$ is reversible with respect to
$S(\xi,\eta)=(\eta,\xi)$, the coordinate functions of $X^{2}_1$ satisfy
relation~\eqref{condS}. Hence $X^{2}_1$ can be written in the form
\[
X^2_1(\xi,\eta)=\bigl(
 -a_{20}\xi^2-a_{11}\xi\eta-a_{02}\eta^2,\,
 a_{02}\xi^2+a_{11}\xi\eta+a_{20}\eta^2
\bigr),
\]
where the coefficients $a_{kj}$ are determined by the coefficients of the
Taylor series of $f_1$.

Similarly, we write the power–series expansion of $h$ as
\[
h(\xi,\eta)=(\xi,\eta)+h^{2}(\xi,\eta)+\cdots.
\]
Since we want $h$ to commute with $S$, we must impose
\[
h\circ S=S\circ h,
\]
which is equivalent to
\begin{equation}\label{condh}
h_2(\xi,\eta)=h_1\circ S(\xi,\eta)=h_1(\eta,\xi).
\end{equation}
Consequently, $h^{2}$ can be written as
\[
h^2(\xi,\eta)=\bigl(
b_{20}\xi^2+b_{11}\xi\eta+b_{02}\eta^2,\,
b_{02}\xi^2+b_{11}\xi\eta+b_{20}\eta^2
\bigr),
\]
where the coefficients $b_{kj}$ are, a priori, undetermined.

The homological operator associated with the linear part $B$,
denoted by $L_B^{(2)}h^2$, is defined (see Section~2.3 of Chapter~1 in
\cite{nascimento2023sistemas}) by
\[
(L_B^{(2)}h^2)(\xi,\eta)
= D h^2(\xi,\eta)\, B(\xi,\eta) - B\, h^2(\xi,\eta).
\]
Substituting the explicit form of $h^2$ and carrying out the corresponding
computations, we obtain
\[
(L_B^{(2)}h^2)(\xi,\eta)
=
\bigl(
 -b_{20}\,\xi^{2}
 + b_{11}\,\xi\eta
 + 3 b_{02}\,\eta^{2},
\;
 -3 b_{02}\,\xi^{2}
 - b_{11}\,\xi\eta
 + b_{20}\,\eta^{2}
\bigr).
\]

Imposing
\[
(L_B^{(2)}h^2)(\xi,\eta)=X^2_1(\xi,\eta),
\]
we obtain
\[
b_{20}=a_{20},\qquad b_{11}=-a_{11},\qquad b_{02}=-\frac{a_{02}}{3}.
\]
Therefore all coefficients of $h^2$ are determined by the coefficients of
$X^2_1$. This implies that the change of variables
\[
(\xi,\eta)\mapsto(\xi,\eta)+h^2(\xi,\eta)
\]
transforms the field $X_1$ into a new field $X_2$ of the form
\[
X_2(\xi,\eta)=B(\xi,\eta)+X^3_2(\xi,\eta)+\cdots.
\]
Moreover, since $h^2(\xi,\eta)$ commutes with $S$, Lemma~\ref{auxlem} guarantees
that $X_2$ is reversible with respect to~$S$. Thus $X^3_2$ can be written as
\[
X^3_2(\xi,\eta)=\bigl(
 -a_{30}\xi^3-a_{21}\xi^2\eta-a_{12}\xi\eta^2-a_{03}\eta^3,\,
 a_{03}\xi^3+a_{12}\xi^2\eta+a_{21}\xi\eta^2+a_{30}\eta^3
\bigr).
\]

Repeating the same procedure, we take
\[
h^3(\xi,\eta)=\bigl(
b_{30}\xi^3+b_{21}\xi^2\eta+b_{12}\xi\eta^2+b_{03}\eta^3,\,
b_{03}\xi^3+b_{12}\xi^2\eta+b_{21}\xi\eta^2+b_{30}\eta^3
\bigr)
\]
and obtain
\[
(L_B^{(3)}h^3)(\xi,\eta)=\bigl(
-2 b_{30}\xi^3+2b_{12}\xi\eta^2+4b_{03}\eta^3,\,
-4 b_{03}\xi^3-2b_{12}\xi^2\eta+2b_{30}\eta^3
\bigr).
\]
Imposing now
\[
(L_B^{(3)}h^3)(\xi,\eta)=X^3_2(\xi,\eta),
\]
we arrive at
\[
b_{30}=\frac{a_{30}}{2},\qquad b_{12}=-\frac{a_{12}}{2},\qquad b_{03}=-\frac{a_{03}}{4}.
\]

Observe that in this case we cannot determine the coefficient $b_{21}$ in terms of the
coefficients of $X^3_2$. This means that the coefficient $a_{21}$ cannot be removed by
the change of variables. Thus the change
\[
(\xi,\eta)\mapsto(\xi,\eta)+h^3(\xi,\eta)
\]
transforms the field $X_2$ into a new field $X_3$ of the form
\[
X_3(\xi,\eta)=B(\xi,\eta)+\bigl(-a_{21}\xi^2\eta,\ a_{21}\xi\eta^2\bigr)+X^4_3(\xi,\eta)+\cdots.
\]
Moreover, since $h^3(\xi,\eta)$ commutes with $S$, Lemma~\ref{auxlem} again ensures that
$X_3$ is reversible with respect to~$S$.

\medskip

The above procedure can be iterated by induction for every $k\geq 2$. At step $k\geq 2$
the polynomial $h^{k}$ is determined by the involution $S$ and by the coefficients of the
polynomial $X^k_{k-1}$, which have already been fixed at the previous step. Applying the
change of variables
\[
(\xi,\eta)\mapsto(\xi,\eta)+h^{k}(\xi,\eta)
\]
to the field $X_{k-1}$ we obtain a new field $X_k$, which is, by construction, reversible
with respect to $S$. Since the procedure works for $k=2$, by the principle of mathematical
induction it is valid for all $k\in\mathbb{N}$ with $k\geq 2$.

\medskip

We now derive the general expression of $h^{k}$ from $X^k_{k-1}$, for $k\geq 2$. Since
$X^k_{k-1}$ is $S$-reversible and $h^{k}$ commutes with $S$, we can write
\[
X^k_{k-1}(\xi,\eta)=\Bigl(
 -\sum_{j=0}^k a_{k-j,j}\,\xi^{k-j}\eta^j,\ 
 \sum_{j=0}^k a_{j,k-j}\,\xi^{k-j}\eta^j
\Bigr)
\]
and
\[
h^k(\xi,\eta)=\Bigl(
 \sum_{j=0}^k b_{k-j,j}\,\xi^{k-j}\eta^j,\ 
 \sum_{j=0}^k b_{j,k-j}\,\xi^{k-j}\eta^j
\Bigr),
\]
where the coefficients $a_{k-j,j}$ are determined by the previous steps and the
$b_{j,k-j}$ are, a priori, undetermined.

The homological operator
\[
(L_B^{(k)}h^k)(\xi,\eta)=Dh^k(\xi,\eta)\,B(\xi,\eta)-B\,h^k(\xi,\eta)
\]
takes, after a direct computation, the form
\[
(L_B^{(k)}h^k)(\xi,\eta)=\Biggl(
 \sum_{j=0}^k (-k+2j+1)\,b_{k-j,j}\,\xi^{k-j}\eta^j,\ 
 \sum_{j=0}^k (-k+2j-1)\,b_{j,k-j}\,\xi^{k-j}\eta^j
\Biggr).
\]
Imposing
\[
(L_B^{(k)}h^k)(\xi,\eta)=X^k_{k-1}(\xi,\eta),
\]
we obtain, for every $k\geq 2$ and every $0\leq j\leq k$,
\[
\begin{cases}
(-k+2j+1)\,b_{k-j,j}=-a_{k-j,j},\\[2pt]
(-k+2j-1)\,b_{j,k-j}=a_{j,k-j}.
\end{cases}
\]

Note that these two equations yield the same coefficients, so it suffices to solve one of
them. From the first equation we see that $-k+2j+1=0$ if and only if $k=2j+1$, that is,
if $k$ is odd. Thus, if $k$ is even we obtain
\[
b_{k-j,j}=\frac{a_{k-j,j}}{k-2j-1},
\]
for every $k\geq 2$ and $0\leq j\leq k$. This shows that when $k$ is even the polynomials
$X^k_{k-1}$ are completely removed by the change of variables
$(\xi,\eta)\mapsto(\xi,\eta)+h^{k}(\xi,\eta)$ (as exemplified in the case $k=2$).

Now suppose that $k$ is odd, that is, $k=2m+1$ with $m\geq 1$. Then
\[
-k+2j+1=-2(m-j),
\]
and consequently
\[
b{\,}_{2m+1-j,j}=\frac{a_{2m+1-j,j}}{2(m-j)},
\]
for every $j\neq m$. The terms of $X^k_{k-1}$ associated with these coefficients are also
removed by the change of coordinates. However, for $j=m$ the coefficient $b_{m+1,m}$ cannot
be expressed in terms of $a_{m+1,m}$; hence the terms of $X^k_{k-1}$ of the form
\[
\bigl(-a_{m+1,m}\,\xi^{m+1}\eta^m,\ a_{m+1,m}\,\xi^m\eta^{m+1}\bigr)
\]
cannot be eliminated by the change of variables
$(\xi,\eta)\mapsto(\xi,\eta)+h^{k}(\xi,\eta)$ when $k$ is odd. This is exactly what
happens in the case $k=3$. When $k=2m+1$, the coefficients $b_{m+1,m}$ remain free, that
is, we may assign them arbitrary values; for simplicity we shall assume $b_{m+1,m}=0$ for
all $m\geq 1$.

\medskip

Let
\[
h(\xi,\eta)=\bigl(\xi+h_1(\xi,\eta),\ \eta+h_2(\xi,\eta)\bigr)
\]
be the formal diffeomorphism constructed by the above procedure. Since $h$ commutes with
$S$, the change of variables
\begin{equation}\label{coord}
(\xi,\eta)\longmapsto\bigl(\xi+h_1(\xi,\eta),\ \eta+h_2(\xi,\eta)\bigr)
\end{equation}
transforms the $S$-reversible system~\eqref{sist_2} into a new $S$-reversible system that
can be written as
\begin{equation}\label{sist_4}
\begin{cases}
\dot{\xi}=-\xi-\displaystyle\sum_{m\geq 1} g_m(\xi\eta)^m\,\xi,\\[6pt]
\dot{\eta}=\eta+\displaystyle\sum_{m\geq 1} g_m(\xi\eta)^m\,\eta,
\end{cases}
\end{equation}
where the $g_m$ are precisely the coefficients $a_{m+1,m}$ that were not eliminated by the
changes of variables. Defining $g:\mathbb{R}\to\mathbb{R}$ by
\begin{equation}\label{dfg}
g(t)=\sum_{m\geq 1} g_m t^m,
\end{equation}
we can rewrite system~\eqref{sist_4} as
\begin{equation}\label{sist_5}
\begin{cases}
\dot{\xi}=-\xi-\xi\,g(\xi\eta),\\[4pt]
\dot{\eta}=\eta+\eta\,g(\xi\eta),
\end{cases}
\end{equation}
with $g(0)=0$.

Finally, the linear change of variables
\[
(x,y)=C^{-1}(\xi,\eta)
\]
transforms system~\eqref{sist_5} into the system
\begin{equation}\label{sist_4_1}
\widehat{X}(x,y):=
\begin{cases}
\dot{x}=y\,G(x^2-y^2),\\[4pt]
\dot{y}=x\,G(x^2-y^2),
\end{cases}
\end{equation}
where
\[
G(t)=1+g\!\left(\frac{t}{4}\right).
\]

Observe that system \eqref{sist_4_1} is Hamiltonian, with Hamiltonian given by
\[
H(x,y)=-F(x^2-y^2),
\qquad
F(t)=\frac{1}{2}\int_0^t G(s)\,ds.
\]

Moreover, system~\eqref{sist_4_1} is reversible with respect to $R(x,y)=(x,-y)$, since,
as at the beginning of the formal conjugacy, we have $S=C^{-1}RC$, and therefore
\[
R = C S C^{-1} =
\begin{pmatrix}
1 & 0\\
0 & -1
\end{pmatrix}.
\]

This completes the construction of the formal conjugacy.

\

In summary, the $R$-reversible field $X(u,v)$ defined by system~\eqref{caso-sela} is
formally conjugate to the $R$-reversible field $\widehat{X}(x,y)$ defined by
\eqref{sist_4_1}. Moreover, the formal diffeomorphism $(x,y)=\overline{h}(u,v)$ that
conjugates $X$ and $\widehat{X}$ commutes with $R$. In fact, by construction,
\[
\overline{h}(u,v)=C\circ h\circ C^{-1}(u,v),
\]
where $h(\xi,\eta)$ is the diffeomorphism that conjugates systems~\eqref{sist_2} and
\eqref{sist_5}. Since $h\circ S=S\circ h$ and $CS=RC$, we have
\begin{align*}
\overline{h}\circ R(u,v)
&=C\circ h\circ C^{-1}R(u,v)\\
&=C\circ h\circ S\circ C^{-1}(u,v)\\
&=C\circ S\circ h\circ C^{-1}(u,v)\\
&=R\circ C\circ h\circ C^{-1}(u,v)
=R\circ \overline{h}(u,v),
\end{align*}
that is, $(x,y)=\overline{h}(u,v)$ commutes with $R$.

\subsubsection{Convergence}

Observe that, since $C$ and $C^{-1}$ are linear transformations, the convergence of
$\overline{h}(u,v)$ is equivalent to the convergence of $h(\xi,\eta)$; that is,
$\overline{h}(u,v)$ converges if and only if $h(\xi,\eta)$ converges, where
$h(\xi,\eta)$ is the formal diffeomorphism that conjugates
systems~\eqref{sist_2} and~\eqref{sist_5}.

Let
\begin{equation}\label{ss5}
h(\xi,\eta)=\bigl(\xi+h_1(\xi,\eta),\,\eta+h_2(\xi,\eta)\bigr),\qquad
h_1(0,0)=h_2(0,0)=0,
\end{equation}
be the formal diffeomorphism that conjugates systems \eqref{sist_2} and \eqref{sist_5},
and let $g$ be the formal function defined in \eqref{dfg}. Our goal is to show that $h$
and $g$ are convergent in a neighbourhood of the origin.

Since $h$ commutes with $S$, it follows from \eqref{condh} that
\[
h_2(\xi,\eta)=h_1(\eta,\xi),
\]
so the convergence of $h_1$ implies the convergence of $h_2$. Thus the convergence of $h$
reduces to the convergence of $h_1$. We write
\[
f_1(\xi,\eta)=\sum_{k=2}^{\infty} f^k_1(\xi,\eta)
\quad\text{and}\quad
h_1(\xi,\eta)=\sum_{k=2}^{\infty} h^k_1(\xi,\eta)
\]
as the Taylor expansions of $f_1$ and $h_1$, where
\[
f^k_1(\xi,\eta)=\sum_{j=0}^{k} a_{k-j,j}\,\xi^{k-j}\eta^j,
\qquad
h^k_1(\xi,\eta)=\sum_{j=0}^{k} b_{k-j,j}\,\xi^{k-j}\eta^j.
\]

Applying the change of variables and rearranging the terms, we obtain the relation
(cf.~\cite[Chap.~2, \S17]{siegel2012lectures})
\begin{equation}\label{s6}
(-k+2j+1)\,b_{k-j,j}=\tilde{a}_{k-j,j}
-\sum_{m=1}^{n}(k-2j)\,g_m\,b_{k-j-m,j-m},
\end{equation}
where $\tilde{a}_{k-j,j}$ denotes the coefficient of $\xi^{k-j}\eta^{j}$ in the
reexpansion of $f_1\bigl(h(\xi,\eta)\bigr)$ as a power series in $(\xi,\eta)$, and
\[
n=\min\{k-j,j\}.
\]
Thus, for those values of $k$ such that $-k+2j+1\neq 0$, we have
\begin{equation}\label{s7}
b_{k-j,j}=\frac{1}{-k+2j+1}\left(
\tilde{a}_{k-j,j}-\sum_{m=1}^{n}(k-2j)\,g_m\,b_{k-j-m,j-m}
\right).
\end{equation}

Recall that when $k=2m+1$ we have $-k+2j+1=0$ if and only if $j=m$. In this case
$k-2j=2m+1-2j=2(m-j)+1$, and in particular, for $j=m$ we obtain $k-2m=1$. Substituting
$k=2m+1$ into \eqref{s6}, we get
\[
-2(m-j)\,b_{2m+1-j,j}
=\tilde{a}_{2m+1-j,j}-\sum_{m=1}^{n}(2(m-j)+1)\,g_m\,b_{2m+1-j-m,j-m}.
\]
Taking $j=m$, we obtain
\[
0=\tilde{a}_{m+1,m}-g_m b_{1,0}.
\]
Since $b_{1,0}=1$, it follows that
\[
g_m=\tilde{a}_{m+1,m}.
\]
In this case we choose
\[
b_{m+1,m}=0.
\]

Summarizing, we have
\begin{equation}\label{s12}
\begin{cases}
g_m=\tilde{a}_{m+1,m},\quad b_{m+1,m}=0, & \text{if } k=2m+1 \text{ and } j=m,\\[4pt]
\displaystyle
b_{k-j,j}=\frac{\tilde{a}_{k-j,j}}{-k+2j+1}
-\sum_{m=1}^{n}\frac{k-2j}{-k+2j+1}\,g_m\,b_{k-j-m,j-m},
& \text{otherwise.}
\end{cases}
\end{equation}
Therefore the above construction produces a unique pair of formal objects \(g\) and \(h\)
compatible with reversibility and with the structure of the homological equation.

\

The next step is to obtain bounds for $g_m$ and $b_{k-j,j}$. From the first equation in
\eqref{s12} we obtain
\[
|g_m|\leq|\tilde{a}_{m+1,m}|,
\]
and therefore
\begin{equation}\label{mg}
\widehat{g}(\xi\eta):=\sum_{m=1}^\infty |g_m|(\xi\eta)^m
\leq\sum_{m=1}^\infty |\tilde{a}_{m+1,m}|(\xi\eta)^m
\leq\sum_{k=2}^\infty\sum_{j=0}^k |\tilde{a}_{k-j,j}|\xi^{k-j}\eta^j
=:\widehat{f}(\xi,\eta).
\end{equation}

Note that $\widehat{f}$ is an analytic function in a neighbourhood of the origin, since it
is obtained from the function
\[
f(\xi,\eta)=\sum_{k=2}^\infty\sum_{j=0}^k \tilde{a}_{k-j,j}\,\xi^{k-j}\eta^j
\]
by replacing each coefficient $\tilde{a}_{k-j,j}$ with its absolute value. As $f$ is
analytic near the origin (being the composition of $f_1$ with polynomials), it follows
that $\widehat{f}$ is also analytic near the origin. Since $\widehat{g}$ is dominated by
$\widehat{f}$ (\(\widehat{g}\prec\widehat{f}\)), we conclude that $\widehat{g}$ is
analytic in a neighbourhood of $0$ and consequently
\[
g(t)=\sum_{m\geq 1}g_m t^m
\]
is analytic in a neighbourhood of $0$.

\

From the second equation of \eqref{s12} we obtain
\begin{equation}\label{eq_28}
|b_{k-j,j}|
\leq\left|\frac{1}{-k+2j+1}\right|\,|\tilde{a}_{k-j,j}|
+\sum_{m=1}^{n}\left|\frac{k-2j}{-k+2j+1}\right|\,|g_m|\,|b_{k-j-m,j-m}|.
\end{equation}
There exists a constant $c>0$, independent of $k$ and $j$, such that
\[
\left|\frac{1}{-k+2j+1}\right|<c
\quad\text{and}\quad
\left|\frac{k-2j}{-k+2j+1}\right|<c
\]
for all relevant indices. Hence
\[
|b_{k-j,j}|
\leq c\,|\tilde{a}_{k-j,j}|
+c\sum_{m=1}^{n}|g_m|\,|b_{k-j-m,j-m}|.
\]

It then follows that
\begin{align*}
\widehat{h}(\xi,\eta)
&:=\sum_{k=2}^\infty\sum_{j=0}^k |b_{k-j,j}|\,\xi^{k-j}\eta^j\\
&\leq c\sum_{k=2}^\infty\sum_{j=0}^k |\tilde{a}_{k-j,j}|\,\xi^{k-j}\eta^j
+c\sum_{k=2}^\infty\sum_{j=0}^k\sum_{m=1}^{n}|g_m|\,|b_{k-j-m,j-m}|\,\xi^{k-j}\eta^j\\
&\leq c\,\widehat{f}(\xi,\eta)
+c\,\widehat{g}(\xi\eta)\,\widehat{h}(\xi,\eta)\\
&\leq c\,\widehat{f}(\xi,\eta)
+c\,\widehat{f}(\xi,\eta)\,\widehat{h}(\xi,\eta),
\end{align*}
where, in the last inequality, we used \eqref{mg}. Thus
\[
\widehat{h}(\xi,\eta)
\prec
\frac{c\,\widehat{f}(\xi,\eta)}{1-c\,\widehat{f}(\xi,\eta)}
=:F(\xi,\eta),
\]
whenever $c\,\widehat{f}(\xi,\eta)<1$. The function $F$ is analytic in a neighbourhood of
the origin, since it is the composition of the analytic function $x\mapsto x/(1-x)$ (for
$|x|<1$) with $c\,\widehat{f}(\xi,\eta)$. Therefore $\widehat{h}$ is analytic in a
neighbourhood of the origin.

Since $h_1$ is dominated by $\widehat{h}$, it follows that $h_1$ is also analytic in a
neighbourhood of the origin and consequently $h_2$ is analytic (because
$h_2(\xi,\eta)=h_1(\eta,\xi)$).

We thus conclude that the functions $h_i$ ($i=1,2$) and $g$ are convergent in a
neighbourhood of the origin. This completes the proof of Lemma~\ref{lem:sela}.

\end{proof}

\subsubsection{Conclusion of the proof of Theorem~\ref{thm:equiv_reversivel_hamiltoniano}}

The proof naturally splits into two cases, according to the sign of 
\(\det DX(0,0)\), as stated in Lemma~\ref{prop:forma_matriz}.  

\medskip

\noindent\textbf{(i) Center case.}  
In this case the linear part of the field has purely imaginary eigenvalues and, as seen in
Lemma~\ref{prop:forma_matriz}, reversibility rules out the possibility of a focus. Thus
the origin is a nondegenerate center. 

Applying Lemma~\ref{caso-centro}, we obtain an analytic change of coordinates that
transforms the original field into a Hamiltonian field
\[
X_H(x,y)=\bigl(\partial_y H(x,y),\,-\partial_x H(x,y)\bigr),
\qquad
H(x,y)=F(x^2+y^2),
\]
where \(F\) is real-analytic and given by
\[
F(t)=\frac12\int_0^t G(s)\, ds,
\qquad
G(0)>0.
\]
Therefore the theorem is proved in this case.

\medskip

\noindent\textbf{(ii) Saddle case.}  
In this case the linear part of the field has real eigenvalues of opposite signs, so that
the origin is a nondegenerate saddle. By Lemma~\ref{lem:sela} there exists an analytic
change of coordinates that transforms the original field into a Hamiltonian field of the
form
\[
X_H(x,y)=\bigl(\partial_y H(x,y),\,-\partial_x H(x,y)\bigr),
\qquad
H(x,y)=-F(x^2-y^2),
\]
with \(F\) real-analytic and again given by
\[
F(t)=\frac12\int_0^t G(s)\, ds,
\qquad
G(0)>0.
\]

\medskip

In both cases the analytic conjugacy constructed can be chosen so as to preserve the
reversing involution \(R(u,v)=(u,-v)\).

In the saddle case, the very construction of the reversible normal form yields an analytic
diffeomorphism \(h\) satisfying
\[
Dh(u,v)\,X(u,v)=X_H\bigl(h(u,v)\bigr),
\qquad
h\circ R = R\circ h.
\]

In the center case, the construction can be adapted, in a completely analogous way to the
saddle case, to obtain an analytic diffeomorphism \(h\) satisfying the same identities.

\medskip

This completes the proof of Theorem~\ref{thm:equiv_reversivel_hamiltoniano}.

\section{Conclusion}

We have shown that every planar analytic vector field reversible with respect to the
involution $R(u,v)=(u,-v)$ is locally analytically conjugate to a Hamiltonian vector
field whose Hamiltonian function assumes the classical normal form determined by the
type of the equilibrium point. This result substantially strengthens the existing
literature: whereas the equivalence between reversible and Hamiltonian systems in the
plane was previously known only at a \emph{formal} level, as established in
\cite{martins2011formal}, we prove here that such an equivalence can be achieved in the
\emph{analytic} sense and in an \emph{equivariant} way with respect to the reversing
involution.

In the elliptic case, the equivalence can be further refined. As shown in
\cite{dosSantosNascimento2024}, the system is analytically conjugate to a Hamiltonian of
potential type, providing an even more rigid description of the local dynamics.

The question of \emph{global} equivalence remains open in general, even in dimension~2.
Nevertheless, there exist specific classes for which global equivalence can be obtained,
such as the reversible Newtonian systems studied in \cite{nascimento2023sistemas} and
the global normalizations for centers developed in \cite{grotta2025global}.

Thus, the present work completes the local description of the dynamical structure of
planar reversible systems, clarifying their deep connection with Hamiltonian systems and
suggesting that analogous phenomena may occur in higher dimensions — where the theory
remains, for the moment, essentially formal.

\bibliography{Refs} 
\end{document}